\newcommand*{\mailto}[1]{\href{mailto:#1}{\nolinkurl{#1}}}
\newtheorem{theorem}{Theorem}[section]
\newtheorem{lemma}[theorem]{Lemma}
\newcommand{\be}{\begin{equation}}
\newcommand{\ee}{\end{equation}}
\numberwithin{equation}{section}
\begin{document}
\begin{center}
{\Large \bf On recovering Dirac operators with two delays }
\end{center}
\begin{center}
{\bf Biljana Vojvodi\'{c},  Neboj\v{s}a Djuri\'{c} and Vladimir Vladi\v{c}i\'{c} }
\end{center}

{\bf Abstract:}
We study the inverse spectral problems of recovering  Dirac-type functional-differential operator with two constant delays $a_1$ and $a_2$ not less than one-third of the interval. It has been proved that the operator can be recovered uniquely  from four spectra  under the condition $2a_1+\frac{a_2}{2}\geq \pi$, while it is not possible otherwise.

\medskip
\noindent
{\bf Keywords:} Dirac-type operator, constant delay, inverse spectral problem.\\
{\bf Subclass:} 34A55, 34K29. 
\section{Introduction}
\label{sec:int}

The theory of differential equations with delays is a a very significant area of the theory of ordinary differential equations (see \cite{Mysh, Nor}). In last decades, there has been a growing interest in studying inverse spectral problems for different types of operators with one or more delays. It turned out that this type of operators is usually more adequate for modeling different real physical processes, frequently possessing a nonlocal nature. Inverse problems for Sturm-Liouville operators with one  delay have been studied in most details  (see \cite{BondYur18, ButYur19, DB21, DB21-2, DB22, DV19, FrYur12, Pik91, PVV19, VBV22, VlaPik}). There is also considerable number of results related to the Sturm-Liouville operators with two constant delays (see \cite{butc,ppv, vpm, vojj, vpv, vpvc, vv}). In recent years, a significant number of results related to the inverse problems for Sturm-Liouville operators, have been extended to Dirac operators  with one delay (see \cite{but, dju, wang1, wang2}), as well as to Dirac operators with two delays (see \cite{vvdju}). 
\\\\
The key issue in solving inverse problems for operators with delays is the question of inverse problem solution's uniqueness.  Although inverse problem solution's uniqueness was for long thought to be indisputable, as in the case of inverse problems for classical operators (without delays), it turned out that the solution of inverse problems for operators with delays does not have to be unique. It has been shown in the papers \cite{PVV19} and \cite{VlaPik} that Sturm-Liouville operator with one delay can be recovered uniquely from two spectra if the delay belongs to $[\frac{2\pi}{5},\pi),$  while in the papers \cite{DB21} and \cite{DB21-2} has been shown that this is not possible for the delay from $[\frac{\pi}{3},\frac{2\pi}{5}).$ There are the same results for Dirac operator with one delay (see \cite{but} and \cite{dju}). For operators with two delays, there are just results related to the inverse problem solution's uniqueness. So, in the papers \cite{vpv} and \cite{vpvc} it has been proven that  Sturm-Liouville operator can be recovered uniquely from four spectra for the delays greater than $\frac{\pi}{2}$ under Robin and Dirichlet boundary conditions, respectively. In the paper \cite{vojj} inverse problem solution's uniqueness has been proven for the Sturm-Liouville operator with two delays from $[\frac{2\pi}{5},\frac{\pi}{2})$ under Robin boundary conditions. Papers \cite{vpm} and \cite{vv} deal with Sturm-Liouville operator with two delays such that first delay $a_1$ belongs to $[\frac{\pi}{3},\frac{2\pi}{5})$ and the second one to  $[2a_1,\pi) $ under Robin and Dirichlet/Neumann boundary conditions, respectively and uniqueness of inverse problem solution has been proven. In the paper \cite{vvdju} it has been proven that Dirac operator  with two delays from $[\frac{2\pi}{5},\pi)$ can be recovered uniquely from four spectra.  So far  there are no results with non-unique solutions of inverse problems for operators with two delays, even for  Sturm-Liouville operators.This paper will be the first result proving that the uniqueness of the inverse problem's solution does not have to be valid neither for operators with two delays. 
\\\\
In this paper we study Boundary value problems (BVPs) $D_{j}(P,Q,m), m\in\{0,1\}, j\in\{1,2\},$ for Dirac-type system of the form
\begin{align}\label{a01}
    BY'(x)+(-1)^{m}P(x)Y(x-a_{1})+Q(x)Y(x-a_{2})=\lambda Y(x), x\in(0,\pi)
 \end{align}
\begin{equation*}\
y_{1}(0)=y_{j}(\pi)= 0
\end{equation*}
where
\[B=\left[
\begin{array}{cccc}
0 & 1\\
-1 & 0
\end{array}
\right],\hspace{2mm}
Y(x)=\left[
\begin{array}{cccc}
y_{1}(x) \\
y_{2}(x)
\end{array}
\right],\hspace{2mm}
\frac{\pi}{3}\leq a_1<\frac{2\pi}{5},\; \frac{\pi}{3}\leq a_2<\pi,\; a_1<a_2,
\]
\begin{equation*}
P(x)=\left[\begin{array}{cccc}
p_{1}(x) & p_{2}(x)\\
p_{2}(x) & -p_{1}(x)
\end{array}
\right],\hspace{2mm}
Q(x)=\left[\begin{array}{cccc}
q_{1}(x) & q_{2}(x)\\
q_{2}(x) & -q_{1}(x)
\end{array}
\right],
\\  
\end{equation*}
\\
and $p_{1}(x),p_{2}(x),q_{1}(x), q_{2}(x)\in L^{2}[0,\pi]$ are complex-valued functions such that
\[ P(x)=0, \hspace{2mm} x\in(0,a_{1}), \hspace{2mm} Q(x)=0, \hspace{2mm}  x\in (0,a_{2}).\]
\\
This paper shall answer the question whether the theorem of uniqueness holds or not in the case when the first delay belongs to  $[\frac{\pi}{3},\frac{2\pi}{5})$ and the second one to $[\frac{\pi}{3},\pi).$  In this way, the results from the paper \cite{dju} dealing with Dirac operator with one delay shall be generalized to Dirac operator with two delays. Besides research in inverse problems for delay(s) less than $\frac{\pi}{3}$, further research in this area should also answer the question weather the theorem of uniqueness holds or not  for Sturm-Liouville operators with two delays greater than one-third of the interval. 
\\\\
Hereinafter we will assume that delays $a_{1}$ and $a_{2}$ are known. Also, 
in the following we will assume that $j\in\{1,2\} $ and $m\in\{0,1\}.$
\\\\
Let $\{\lambda_{n,j}^{m}\}$ be the spectra of the BVPs $D_{j}(P,Q,m).$ The inverse problem of recovering  matrix-functions $P(x), x \in (a_{1},\pi)$  and $Q(x), x \in (a_{2},\pi)$ from four spectra has been studied.\\
\\
{\bf Inverse Problem 1.} Given the spectra $\{\lambda_{n,j}^{m}\}$ of the BVPs $D_{j}(P,Q,m)$, find the matrix-functions $P(x)$ and $Q(x)$.\\
\\
The paper is organized as follows: In Section 2 we construct characteristic functions and study asymptotic behavior of eigenvalues. Section 3 is devoted to the solving  Inverse problem 1. We shall show that Theorem of uniqueness holds in the case when delays meet the condition  $2a_1+\frac{a_2}{2}\geq \pi, $ as well as it does not in the case when  $2a_1+\frac{a_2}{2}< \pi.$ 

\section{Spectral properties}
Based on the results from  the paper \cite{vvdju}, we obtain that equation (\ref{a01}) is equivalent to the integral equation 
\begin{equation}\label{a015}
\begin{split}
Y(x,\lambda)=&R(x,\lambda )C+(-1)^{m}\int\limits_{a_{1}}^{x}P(t)S^{-1}(x-t,\lambda)Y(t-a_{1},\lambda)dt\\
&+\int\limits_{a_{2}}^{x}Q(t)S^{-1}(x-t,\lambda)Y(t-a_{2},\lambda)dt
\end{split}
\end{equation}
where 
\[ R(x,\lambda)=\begin{bmatrix}
\cos \lambda x & -\sin\lambda x\\
\sin \lambda x & \cos \lambda x
\end{bmatrix},\;\;\;\;
S(x,\lambda)=\begin{bmatrix}
\sin \lambda x & cos\lambda x\\
-\cos \lambda x & \sin \lambda x
\end{bmatrix}
\]
and 
\[ C=\left[
\begin{array}{cccc}
c_{1} \\
c_{2}
\end{array}
\right]
\]
is a constant vector.
Let 
\[Y^{m}(x,\lambda)= \begin{bmatrix}
 y_{1}^{m}(x,\lambda) \\ 
 y_{2}^{m}(x,\lambda) \end{bmatrix}\\
 \]
be the fundamental vector-solution of equation (\ref{a01}) such that
\begin{equation*}
\begin{split}  
Y^{m}(0,\lambda) = \begin{bmatrix}
 0 \\ 
 -1 \end{bmatrix}.
 \end{split}
 \end{equation*}
From (\ref{a015}) we obtain
\begin{equation}\label{a020}
\begin{split}
Y^{m}(x,\lambda)=&Y_{0}(x,\lambda )+(-1)^{m}\int\limits_{a_{1}}^{x}P(t)S^{-1}(x-t,\lambda)Y^{m}(t-a_{1},\lambda)dt\\
&+\int\limits_{a_{2}}^{x}Q(t)S^{-1}(x-t,\lambda)Y^{m}(t-a_{2},\lambda)dt
\end{split}
\end{equation}
where
\begin{equation*}\
 \begin{split}   
Y_{0}(x,\lambda)= \begin{bmatrix}
 \sin \lambda x \\ 
 -\cos \lambda x \end{bmatrix}.\\
\end{split}
\end{equation*}
\\
We solve integral equation (\ref{a020}) by the method of successive approximation, where representation of fundamental vector-solution depends on the order of delays (see \cite{vvdju}). 
\\\\
Let us  for $k,l \in \{1,2\},$ introduce notations 
\begin{equation*}\
\alpha_{p_{k}p_{l}}^{1}(x)=\int\limits_{x+a_{1}}^{\pi} p_{k}(t)p_{l}(t-x)dt,\;\;\;\;\;\; \alpha_{q_{k}q_{l}}^{2}(x)=\int\limits_{x+a_{2}}^{\pi} q_{k}(t)q_{l}(t-x)dt, 
\end{equation*}
\begin{equation*}\
\begin{split}
&\alpha_{p_{k}q_{l}}^{12}(x)=\int\limits_{x+\frac{a_{1}+a_{2}}{2}}^{\pi} p_{k}(t)q_{l}(t-x-\frac{a_{1}-a_{2}}{2})dt,\\
&\alpha_{q_{k}p_{l}}^{12}(x)=\int\limits_{x+\frac{a_{1}+a_{2}}{2}}^{\pi} q_{k}(t)p_{l}(t-x-\frac{a_{2}-a_{1}}{2})dt.\
\end{split}
\end{equation*}
\\
Eigenvalues of the BVPs $D_{j}(P,Q,m)$ coincide with zeros of the entire function
\begin{equation*}\
\begin{split}    
\Delta_{j}^{m}(\lambda)=y_{j}^{m}(\pi,\lambda)
\end{split}
\end{equation*}
which is called {\it characteristic function} of BVPs $D_{j}(P,Q,m)$. It has been shown in  \cite{vvdju} that characteristic functions of BVPs $D_{j}(P,Q,m)$ can be represented in the form 
\begin{equation}\label{14}
\begin{split}   
\Delta_{1}^{m}(\lambda)=\sin \lambda \pi + \int\limits_{\frac{a_{1}}{2}}^{\pi-\frac{a_{1}}{2}} K^{m}(x) \cos \lambda (\pi-2x)dx -\int\limits_{\frac{a_{1}}{2}}^{\pi-\frac{a_{1}}{2}} G^{m}(x) \sin \lambda (\pi-2x)dx,
\end{split}
\end{equation}
\begin{equation}\label{15}
\Delta_{2}^{m}(\lambda)=-\cos \lambda \pi +\int\limits_{\frac{a_{1}}{2}}^{\pi-\frac{a_{1}}{2}} K^{m}(x) \sin \lambda (\pi-2x)dx + 
\int\limits_{\frac{a_{1}}{2}}^{\pi-\frac{a_{1}}{2}} G^{m}(x) \cos \lambda (\pi-2x)dx
\end{equation}
where 
\begin{align}\label{14s}
K^{m}(x)=K_{1}^{m}(x)+K_{2}(x)+K_{12}^{m}(x)+K_{21}^{m}(x)
\end{align}
\begin{align}\label{14t}
G^{m}(x)=G_{1}^{m}(x)+G_{2}(x)+G_{12}^{m}(x)+G_{21}^{m}(x)
\end{align}
and for $x\in (\frac{a_{1}}{2},\pi-\frac{a_{1}}{2})$
\begin{equation*}
\begin{split}
K_{1}(x)= (-1)^{m}p_{1}(x+\frac{a_{1}}{2})
-\bigg(\alpha_{p_{1}p_{2}}^{1}(x) -
\alpha_{p_{2}p_{1}}^{1}(x) \bigg)
 \mathbbm{1}_{(a_{1},\pi-a_{1})} (x),\\
G_{1}(x)= (-1)^{m}p_{2}(x+\frac{a_{1}}{2})
-\bigg(\alpha_{p_{1}p_{1}}^{1}(x) +
\alpha_{p_{2}p_{2}}^{1}(x)\bigg) \mathbbm{1}_{(a_{1},\pi-a_{1})} (x),
\end{split}
\end{equation*}
for  $x\in (\frac{a_{2}}{2},\pi-\frac{a_{2}}{2})$
\begin{equation*}
\begin{split}
K_{2}(x)=q_{1}(x+\frac{a_{2}}{2})
-\bigg( \alpha_{q_{1}q_{2}}^{2}(x) - \alpha_{q_{2}q_{1}}^{2}(x) \bigg) \mathbbm{1}_{(a_{2},\pi-a_{2})}(x),\\
G_{2}(x)=q_{2}(x+\frac{a_{2}}{2})
-\bigg(\alpha_{q_{1}q_{1}}^{1}(x) +
\alpha_{q_{2}q_{q}}^{2}(x)\bigg) \mathbbm{1}_{(a_{2},\pi-a_{2})}(x),
\end{split}
\end{equation*}
and for $x\in (\frac{a_{1}+a_{2}}{2},\pi-\frac{a_{1}+a_{2}}{2})$
\begin{equation*}
\begin{split}
K_{12}^{m}(x)=
(-1)^{m}\alpha_{p_{2}q_{1}}^{12}(x)-(-1)^{m}\alpha_{p_{1}q_{2}}^{12}(x),\\\\
K_{21}^{m}(x)=
(-1)^{m}\alpha_{q_{2}p_{1}}^{12}(x)-(-1)^{m}\alpha_{q_{1}p_{2}}^{12}(x),
\end{split}
\end{equation*}
\begin{equation*}
\begin{split}
G_{12}^{m}(x)=-(-1)^{m}\alpha_{p_{1}q_{1}}^{12}(x)-(-1)^{m}\alpha_{p_{2}q_{2}}^{12}(x),
\end{split}
\end{equation*}
\begin{equation*}
\begin{split}
G_{21}^{m}(x)=- (-1)^{m}\alpha_{q_{1}p_{1}}^{12}(x)-(-1)^{m}\alpha_{q_{2}p_{2}}^{12}(x).
\end{split}
\end{equation*}
\\
Now we consider the asymptotic behavior of eigenvalues of BVPs $D_{j}(P,Q,m).$ Using the standard approach involving Rouch\'e's theorem or proof from \cite{dju}, one can show that the next theorem holds.
\begin{theorem}\label{th1}
The boundary value problems $D_{j}(P,Q,m)$ have infinitely many eigenvalues $\lambda_{n,j}^{m},$ $n\in{\mathbb Z},$ of the form
\begin{equation*}
\lambda_{n,j}^{m}=n+\frac{1-j}2+\varkappa_{n,j}^{m} 
\end{equation*}
where for $\varkappa_{n,j}^{m}\ne{0}$ and for  $|n| \to \infty $
\begin{equation*}
\begin{split}
\varkappa_{n,1}^{m} =& \frac{1}{\pi} \int\limits_{\frac{a_{1}}{2}}^{\pi-\frac{a_{1}}{2}} K^{m}(x) \sin2nx \;dx + \frac{1}{\pi} \int\limits_{\frac{a_{1}}{2}}^{\pi-\frac{a_{1}}{2}} G^{m}(x) \cos2nx \;dx+ o(\varkappa_{n,1}^{m}),\\
\varkappa_{n,2}^{m} =& \frac{1}{\pi} \int\limits_{\frac{a_{1}}{2}}^{\pi-\frac{a_{1}}{2}} K^{m}(x) \sin(2n-1)x \; dx \\
&+ \frac{1}{\pi} \int\limits_{\frac{a_{1}}{2}}^{\pi-\frac{a_{1}}{2}} G^{m}(x) \cos(2n-1)x \; dx+ o(\varkappa_{n,2}^{m}).\\
\end{split}
\end{equation*}
\end{theorem}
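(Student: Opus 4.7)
The plan is to follow the standard two-step recipe for delay-type operators with $L^2$ potentials: localize the eigenvalues via Rouché's theorem, then extract their leading asymptotic by substitution into the characteristic equation.

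\textbf{Localization.} I would work with small fixed-radius circles $\Gamma_{n,j}$ centred at the anticipated locations $n$ (for $j=1$) and $n-\tfrac12$ (for $j=2$). On these contours the dominant term $\sin\lambda\pi$ (respectively $-\cos\lambda\pi$) in (\ref{14})--(\ref{15}) is bounded below by a constant depending only on the radius. The two integral perturbations are Fourier--type integrals of $K^m,G^m\in L^2(\tfrac{a_1}{2},\pi-\tfrac{a_1}{2})\subset L^1$, so by the Riemann--Lebesgue lemma they tend to $0$ as $|n|\to\infty$, uniformly on $\Gamma_{n,j}$. Rouché's theorem then places exactly one zero of $\Delta_j^m$ inside each $\Gamma_{n,j}$ for $|n|$ sufficiently large, justifying the representation $\lambda_{n,j}^m=n+\tfrac{1-j}{2}+\varkappa_{n,j}^m$ with $\varkappa_{n,j}^m=o(1)$.

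\textbf{Asymptotic expansion.} I would then substitute this ansatz into $\Delta_j^m(\lambda_{n,j}^m)=0$. The pure trigonometric term $\sin\lambda_{n,j}^m\pi$ (or $-\cos\lambda_{n,j}^m\pi$) unfolds by the addition formula into $(\pm1)^n[\pi\varkappa_{n,j}^m+O((\varkappa_{n,j}^m)^3)]$. Inside the integrals, $\cos\lambda_{n,j}^m(\pi-2x)$ and $\sin\lambda_{n,j}^m(\pi-2x)$ split, again via addition formulas, into a sum of products of $\{\cos,\sin\}$ of the integer part $(n+\tfrac{1-j}{2})(\pi-2x)$ and of the $\varkappa$-part $\varkappa_{n,j}^m(\pi-2x)$. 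The integer part produces the kernels $\cos 2nx,\sin 2nx$ (for $j=1$) or $\cos(2n-1)x,\sin(2n-1)x$ (for $j=2$), while $\cos(\varkappa_{n,j}^m(\pi-2x))=1+O((\varkappa_{n,j}^m)^2)$ and $\sin(\varkappa_{n,j}^m(\pi-2x))=O(\varkappa_{n,j}^m)$.

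\textbf{Extraction of the leading behaviour.} Dividing the resulting identity by the sign $(\pm1)^n$ and collecting the leading terms gives an equation of the form $\pi\varkappa_{n,j}^m=I_K(n)+I_G(n)+R_n,$ where $I_K(n),I_G(n)$ are precisely the Fourier integrals appearing in the statement and $R_n$ gathers the remainders. Each remainder is either $O((\varkappa_{n,j}^m)^2)$ coming from the Taylor expansion, or a product of $\varkappa_{n,j}^m$ with a Fourier integral of $K^m(x)(\pi-2x)$ or $G^m(x)(\pi-2x)$ against $\cos(2nx)$ or $\sin(2nx)$; the Riemann--Lebesgue lemma (or equivalently $\ell^2$-membership of the Fourier coefficients of these bounded $L^2$ functions) ensures the latter factor is $o(1)$. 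Hence $R_n=o(\varkappa_{n,j}^m)$, and dividing by $\pi$ yields the stated formulas.

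\textbf{Main obstacle.} The delicate point is showing that the remainder is genuinely $o(\varkappa_{n,j}^m)$ rather than just $o(1)$; this is what makes the assumption $\varkappa_{n,j}^m\ne 0$ necessary in the statement. The key fact is that every term of $R_n$ factors as $\varkappa_{n,j}^m$ times a quantity that tends to $0$ with $n$, which ultimately rests on $K^m,G^m\in L^2$ on a bounded interval. Once this is in place, the remaining computation is routine, which is why the authors appeal to Rouché and to the method already developed in \cite{dju}.
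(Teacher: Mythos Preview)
Your proposal is correct and follows essentially the same approach as the paper, which simply states that the result follows ``using the standard approach involving Rouch\'e's theorem or proof from \cite{dju}'' without giving further details. Your two-step recipe (Rouch\'e localization, then substitution and Taylor/addition-formula expansion with Riemann--Lebesgue control of the remainders) is exactly the method of \cite{dju} that the authors invoke.
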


\section{Recovering of the matrix-functions}\label{sec3}

In order to recover the matrix-functions $P(x)$ and $Q(x)$ from the spectra $\{\lambda_{n,j}^{m}\}$, at the beginning we construct characteristic functions by Hadamar theorem of factorization.   
\begin{lemma}
The specification of the spectra $(\lambda_{n,j}^{m})$ uniquely determines the characteristic functions $\Delta_{1}^{m}$ and $\Delta_{2}^{m}$ of BVPs $D_{j}(P,Q,m)$ by the formulas
\\
\begin{equation*}
\begin{split}
&\Delta_{1}^{m}(\lambda)=\pi(\lambda_{0,1}^{m}-\lambda)\prod\limits_{|n|\in N}\frac{\lambda_{n,1}^{m}-\lambda}{n}e^{\frac{\lambda}{n}},\\\\
&\Delta_{2}^{m}(\lambda)=\prod\limits_{n\in Z}\frac{\lambda_{n,2}^{m}-\lambda}{n-\frac{1}{2}}e^{\frac{\lambda}{n-\frac{1}{2}}}.
\end{split}
\end{equation*}
\end{lemma}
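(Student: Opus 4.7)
The plan is to apply Hadamard's factorization theorem, with the classical factorizations of $\sin\lambda\pi$ and $\cos\lambda\pi$ serving as reference points to pin down the unspecified constants.

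First I would verify that $\Delta_j^m$ are entire functions of order exactly $1$ and exponential type $\pi$. This follows directly from the representations \eqref{14}--\eqref{15}: since $K^m, G^m \in L^2(\tfrac{a_1}{2},\pi-\tfrac{a_1}{2})$, Cauchy--Schwarz gives $|\Delta_j^m(\lambda)| \leq C e^{\pi|\im\lambda|}$, while the leading $\sin\lambda\pi$ (resp.\ $-\cos\lambda\pi$) realizes the type $\pi$. The zero set is exactly $\{\lambda_{n,j}^m\}_{n\in\mathbb{Z}}$, and Theorem \ref{th1} supplies the asymptotics $\lambda_{n,j}^m = n + (1-j)/2 + o(1)$, from which $\sum|\lambda_{n,j}^m|^{-2} < \infty$ but $\sum|\lambda_{n,j}^m|^{-1} = \infty$, so the canonical product has genus exactly $1$.

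Hadamard's theorem then gives, for some constants $A_j, B_j$,
\[
\Delta_j^m(\lambda) = e^{A_j + B_j \lambda}\prod_n \Bigl(1-\frac{\lambda}{\lambda_{n,j}^m}\Bigr)e^{\lambda/\lambda_{n,j}^m}.
\]
To determine $A_j, B_j$ I would compare with the classical products
\[
\sin\lambda\pi = \pi\lambda\prod_{|n|\in\mathbb{N}}\Bigl(1-\frac{\lambda}{n}\Bigr)e^{\lambda/n},\qquad
\cos\lambda\pi = \prod_{n\in\mathbb{Z}}\Bigl(1-\frac{\lambda}{n-\tfrac{1}{2}}\Bigr)e^{\lambda/(n-1/2)}.
\]
By the Riemann--Lebesgue lemma applied to \eqref{14}--\eqref{15}, one has $\Delta_1^m(it)/\sin(i\pi t)\to 1$ and $\Delta_2^m(it)/(-\cos(i\pi t))\to 1$ as $t\to+\infty$. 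Combined with the displayed products, this forces $B_j=0$ and fixes $e^{A_1}$ and $e^{A_2}$, giving rise to the prefactor $\pi$ in the $\Delta_1^m$ formula (with the $n=0$ zero written explicitly as $(\lambda_{0,1}^m-\lambda)$) and to no prefactor in the $\Delta_2^m$ formula. A routine algebraic rearrangement, bundling the symmetric factor $(1-\lambda/\lambda_{n,j}^m)$ with the denominator $n$ (resp.\ $n-\tfrac{1}{2}$) inherited from the comparison product, then produces the compact form $(\lambda_{n,j}^m-\lambda)/n\cdot e^{\lambda/n}$ stated in the lemma.

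The main technical obstacle is justifying the shift of the exponential convergence factor from $e^{\lambda/\lambda_{n,j}^m}$ to $e^{\lambda/n}$: the discrepancy product $\prod_n \exp\bigl(\lambda(1/\lambda_{n,j}^m - 1/n)\bigr)$ must converge and get absorbed into the term $e^{A_j}$, which reduces to the summability of $\varkappa_{n,j}^m/n^2$. This in turn follows from Theorem \ref{th1} together with the $\ell^2$-summability of $\{\varkappa_{n,j}^m\}$, itself a consequence of the Fourier coefficients of $K^m, G^m \in L^2$ appearing in that theorem.
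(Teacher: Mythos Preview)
The paper does not actually prove this lemma; its entire proof is the single line ``See Theorem~5 in \cite{buttt}.'' Your Hadamard-factorization sketch is the standard route to such product formulas and is essentially correct, so in this sense you have supplied more than the paper does.

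Two small imprecisions are worth flagging. First, the discrepancy $\prod_n \exp\bigl(\lambda(\lambda_{n,j}^{m})^{-1} - \lambda n^{-1}\bigr) = \exp(c\lambda)$ with $c=\sum_n\bigl((\lambda_{n,j}^{m})^{-1} - n^{-1}\bigr)$ is an exponential in $\lambda$, not a constant, so it is absorbed into the $B_j\lambda$ term rather than into $A_j$; the argument survives because your asymptotic comparison along the imaginary axis still forces the total exponential prefactor to be trivial after the shift. Second, the decay of the integral terms relative to $\sin i\pi t$ along the imaginary axis is not Riemann--Lebesgue (that applies to real $\lambda\to\infty$) but rather the direct estimate coming from $|\pi-2x|\le\pi-a_1<\pi$ on the support of $K^m,G^m$, which yields $O(e^{(\pi-a_1)|t|})=o(e^{\pi|t|})$.
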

\begin{proof}
See Theorem 5 in \cite{buttt}.
\end{proof}
\clearpage
\begin{wrapfigure}{R}{0.30\textwidth}
\centering
\includegraphics[width=0.22\textwidth]{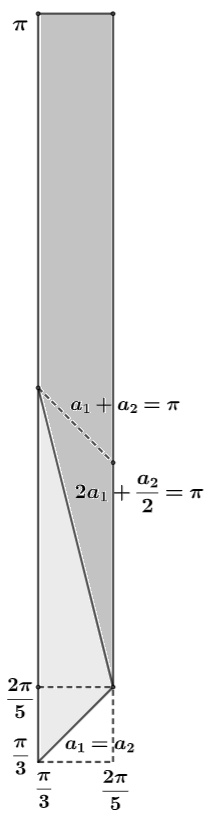}
\caption{Picture 3.1}
\end{wrapfigure}
Using approach from the paper  \cite{dju}, we can recover functions $K^{m}(x)$ and $G^{m}(x)$ by formulas
\begin{equation}\label{20i}
\begin{split}
K^{m}(x)= \sum_{n \in \mathbb{Z}}(\frac{(-1)^n}{\pi}\Theta_{1}^m(n) +i\frac{(-1)^n}{\pi}\Theta_{2}^m(n)) e^{2inx} \\
\end{split}
\end{equation}
\begin{equation}\label{20j}
\begin{split}
G^{m}(x)= \sum_{n \in \mathbb{Z}}(\frac{(-1)^n}{\pi}\Theta_{3}^m(n) +i\frac{(-1)^n}{\pi}\Theta_{4}^m(n)) e^{2inx} 
\end{split}
\end{equation}
where
\begin{equation*}\
\begin{split}
&\Theta_{1}^m(\lambda)=\frac{\Delta_{1}^{m}(\lambda)+\Delta_{1}^{m}(-\lambda)}{2},\\\\
&\Theta_{2}^m(\lambda)=\frac{\Delta_{2}^{m}(\lambda)-\Delta_{2}^{m}(-\lambda)}{2},\\\\
&\Theta_{3}^m(\lambda)=\frac{-\Delta_{1}^{m}(\lambda)+\Delta_{1}^{m}(-\lambda)}{2}+\sin{\lambda \pi},\\\\
&\Theta_{4}^m(\lambda)=\frac{\Delta_{2}^{m}(\lambda)+\Delta_{2}^{m}(-\lambda)}{2}+\cos{\lambda \pi}.\\
\end{split}
\end{equation*}
Now we come to our main result.  Using functions $K^m(x)$ and $G^m(x)$ from  (\ref{20i}) and (\ref{20j}) respectively,  we shall answer the question  whether the theorem of uniqueness for Inverse problem 1 holds on the set
\[R=\{(a_1,a_2):\;\frac{\pi}{3}<a_1<\frac{2\pi}{5}\;\wedge\; \frac{\pi}{3}<a_2<\pi \;\wedge \;a_1<a_2\}.\] 
It will be shown that it is true on the subset 
\[R_1=\{(a_1,a_2)\in R: \;\frac{\pi}{3}<a_1<\frac{2\pi}{5}<a_2<\pi\;\wedge\; 2a_1+\frac{a_2}{2}\geq\pi\},\]  
while on the subset
\[R_2=R\setminus R_1=\{(a_1,a_2) \in R: \frac{\pi}{3}<a_1<\frac{2\pi}{5}\;\wedge\;\frac{\pi}{3}<a_2<\frac{2\pi}{3}\;\wedge \;2a_1+\frac{a_2}{2}<\pi\}\]  
that is not true, Picture 3.1.
\clearpage
Firstly we will prove that the theorem of uniqueness holds on the subset $R_1.$  
We will recover functions $p_{1}(x),\hspace {1mm} p_{2}(x)$ by formulas
\begin{equation}\label{23}
\begin{split}
p_{1}(x)=\frac{1}{2}(K^{0}(x-\frac{a_{1}}{2})-K^{1}(x-\frac{a_{1}}{2}))+\frac{1}{2}A_1(x-\frac{a_{1}}{2}),\\
p_{2}(x)=\frac{1}{2}(G^{0}(x-\frac{a_{1}}{2})-G^{1}(x-\frac{a_{1}}{2}))+\frac{1}{2}A_2(x-\frac{a_{1}}{2})
\end{split}
\end{equation}
where for $x\in(\frac{a_1+a_2}{2},\pi-\frac{a_1+a_2}{2})$
\begin{equation*}\
\begin{split}
&A_1(x)=-\alpha_{p_{2}q_{1}}^{12}(x)+
\alpha_{p_{1}q_{2}}^{12}(x)-
\alpha_{q_{2}p_{1}}^{12}(x)+
\alpha_{q_{1}p_{2}}^{12}(x),\\\\
&A_2(x)=\alpha_{p_{1}q_{1}}^{12}(x)+
\alpha_{p_{2}q_{2}}^{12}(x)+
\alpha_{q_{1}p_{1}}^{12}(x)+
\alpha_{q_{2}p_{2}}^{12}(x)
\end{split}
\end{equation*}
and 
\[A_1(x)=A_2(x)=0,\;\;\; x\notin (\frac{a_1+a_2}{2},\pi-\frac{a_1+a_2}{2}).\]
Functions $q_{1}(x),\hspace {1mm} q_{2}(x)$ will be recovered by formulas
\begin{equation}\label{24}
\begin{split}
q_{1}(x)=\frac{1}{2}(K^{0}(x-\frac{a_{2}}{2})+K^{1}(x-\frac{a_{2}}{2}))+\frac{1}{2}B_1(x-\frac{a_{2}}{2}),\\
q_{2}(x)=\frac{1}{2}(G^{0}(x-\frac{a_{2}}{2})+G^{1}(x-\frac{a_{2}}{2}))+\frac{1}{2}B_2(x-\frac{a_{2}}{2})
\end{split}
\end{equation}
where for $x\in (a_1,a_2)\cup(\pi-a_2,\pi-a_1)$
\begin{equation*}
\begin{split}
&B_1(x)=
\alpha_{p_{1}p_{2}}^{1}(x)-
\alpha_{p_{2}p_{1}}^{1}(x),\\\\
&B_2(x)=\alpha_{p_{1}p_{1}}^{1}(x)+
\alpha_{p_{2}p_{2}}^{1}(x),
\end{split}
\end{equation*}
for $x\in (a_2,\pi-a_2)$
\begin{equation*}\
\begin{split}
&B_1(x)=\alpha_{p_{1}p_{2}}^{1}(x)-
\alpha_{p_{2}p_{1}}^{1}(x)+
\alpha_{q_{1}q_{2}}^{2}(x)-
\alpha_{q_{2}q_{1}}^{2}(x),
 \\\\
&B_2(x)=\alpha_{p_{1}p_{1}}^{1}(x)+
\alpha_{p_{2}p_{1}}^{1}(x)+
\alpha_{q_{1}q_{1}}^{2}(x)+
\alpha_{q_{2}q_{2}}^{2}(x),
\end{split}
\end{equation*}
and 
\[B_1(x)=B_2(x)=0,\;\;\; x\notin (a_1,\pi-a_1).\]

\begin{theorem}
Let  $\frac{\pi}{3}\leq a_1<\frac{2\pi}{5}<a_2<\pi$ and $2a_1+\frac{a_2}{2}\geq \pi.$ The spectra $(\lambda_{n,j}^{m})$  of BVPs $D_{j}(P,Q,m) $ uniquely determine matrix-functions $P(x), \hspace{1mm} x \in (a_1,\pi)$ and $Q(x), \hspace{1mm} x \in (a_2,\pi).$ \
\end{theorem}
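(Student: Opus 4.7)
The plan is to reduce the inverse problem to the inversion of the reconstruction formulas (\ref{23})--(\ref{24}), which I would then solve by a right-to-left Volterra-type iteration, following the approach of \cite{dju} adapted to two delays. First, the preceding lemma already shows that the spectra $\{\lambda_{n,j}^{m}\}$ uniquely determine $\Delta_{j}^{m}$; combining (\ref{14})--(\ref{15}) with the inversion formulas (\ref{20i})--(\ref{20j}) then recovers $K^{m}(x)$ and $G^{m}(x)$ on $(\frac{a_{1}}{2}, \pi - \frac{a_{1}}{2})$. It therefore suffices to show that (\ref{23})--(\ref{24}) have at most one solution $(p_{1}, p_{2}, q_{1}, q_{2})$ given $K^{m}, G^{m}$.

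The initialization step observes that $A_{1}, A_{2}$ are supported in $(\frac{a_{1}+a_{2}}{2}, \pi - \frac{a_{1}+a_{2}}{2})$, so (\ref{23}) determines $p_{1}, p_{2}$ directly from $\frac{1}{2}(K^{0} - K^{1})$ and $\frac{1}{2}(G^{0} - G^{1})$ on the outer intervals $(a_{1}, a_{1} + \frac{a_{2}}{2}) \cup (\pi - \frac{a_{2}}{2}, \pi)$. Analogously, $B_{1}, B_{2}$ are supported in $(a_{1}, \pi - a_{1})$, so after the shift $q_{1}, q_{2}$ are read off directly from $\frac{1}{2}(K^{0} + K^{1})$ and $\frac{1}{2}(G^{0} + G^{1})$ on the part of $(a_{2}, \pi)$ lying outside $(a_{1} + \frac{a_{2}}{2}, \pi - a_{1} + \frac{a_{2}}{2})$.

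The heart of the proof is then the right-to-left recursion on the inner intervals. For $x \in (a_{1} + \frac{a_{2}}{2}, \pi - \frac{a_{2}}{2})$, the integrals $\alpha^{12}_{p_{k}q_{l}}(x - \frac{a_{1}}{2})$ and $\alpha^{12}_{q_{k}p_{l}}(x - \frac{a_{1}}{2})$ that build up $A_{1}, A_{2}$ have outer variable in $(x + \frac{a_{2}}{2}, \pi) \subset (a_{1} + a_{2}, \pi)$ and inner arguments $q_{l}$ on a subset of $(a_{2}, \pi - a_{1})$, $p_{l}$ on a subset of $(a_{1}, \pi - a_{2})$. The hypothesis $2a_{1} + \frac{a_{2}}{2} \geq \pi$ rearranges to $\pi - a_{1} + \frac{a_{2}}{2} \leq a_{1} + a_{2}$, which is exactly the inequality guaranteeing that the easy top seed $(\pi - a_{1} + \frac{a_{2}}{2}, \pi)$ of $q$ extends past $a_{1} + a_{2}$; together with the inclusion $(\pi - \frac{a_{2}}{2}, \pi) \supset (a_{1} + a_{2}, \pi)$ (a consequence of the hypothesis combined with $a_{1} < a_{2}$), this ensures that every integrand value needed by $A_{1}, A_{2}$ on the inner interval is already known, so $p_{1}, p_{2}$ are recovered there. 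The now-complete $p_{1}, p_{2}$ on $(a_{1}, \pi)$ then feed the $\alpha^{1}$-integrals in $B_{1}, B_{2}$, and an analogous Volterra iteration extends $q_{1}, q_{2}$ onto the remaining pieces of $(a_{2}, \pi)$.

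The main obstacle will be the case-by-case bookkeeping of the eight integrals $\alpha_{p_{k}p_{l}}^{1}, \alpha_{q_{k}q_{l}}^{2}, \alpha_{p_{k}q_{l}}^{12}, \alpha_{q_{k}p_{l}}^{12}$ distributed across the four correction functions, verifying at each stage that both the outer range of integration and the inner shifted argument land inside the already-recovered portions of $p$ and $q$. The sub-case distinction $a_{2} < 2a_{1}$ versus $a_{2} \geq 2a_{1}$, which controls whether the upper-right seed of $q$ inside $(a_{2}, \pi)$ is nonempty, has to be handled separately; in the second sub-case the leftmost leg of the $q$-seed $(a_{2}, a_{1} + \frac{a_{2}}{2})$ also degenerates and the recursion must start by solving a short Volterra equation purely in $q$. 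Under the hypothesis $2a_{1} + \frac{a_{2}}{2} \geq \pi$, the recursion closes in finitely many nested steps and gives the uniqueness of $P$ on $(a_{1}, \pi)$ and $Q$ on $(a_{2}, \pi)$.
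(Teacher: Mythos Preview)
Your strategy is correct and matches the paper's: recover $K^m,G^m$ from the spectra, then invert (\ref{23})--(\ref{24}) by first reading off $p_1,p_2$ and $q_1,q_2$ on the outer ``seed'' intervals where $A_i,B_i$ vanish, and then showing that on the inner intervals the correction integrals only involve values of $p,q$ already known. The key use of the hypothesis $2a_1+\tfrac{a_2}{2}\ge\pi$ is exactly as you identify.

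Two points of comparison are worth noting. First, your case split $a_2<2a_1$ versus $a_2\ge 2a_1$ is less natural than the paper's split $a_1+a_2\ge\pi$ versus $a_1+a_2<\pi$ (with a further sub-split $a_2\ge\tfrac{\pi}{2}$ versus $a_2<\tfrac{\pi}{2}$): these thresholds govern directly whether the mixed integrals $\alpha^{12}$ and the integrals $\alpha^2$ appear at all. In particular, under the theorem's hypotheses the case $a_2\ge 2a_1$ forces $a_1+a_2\ge\pi$ and $a_2>\tfrac{\pi}{2}$, so both $A_i\equiv 0$ and the $\alpha^2$-part of $B_i$ vanishes; this is the \emph{easiest} case, not one requiring a Volterra step as you suggest.

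Second, and more importantly, no genuine Volterra iteration is ever needed. Your own inequalities already show this: once $p$ is known on $(a_1,a_1+\tfrac{a_2}{2})\cup(\pi-\tfrac{a_2}{2},\pi)$ and $q$ on $(a_2,a_1+\tfrac{a_2}{2})\cup(\pi-a_1+\tfrac{a_2}{2},\pi)$, the hypothesis $2a_1+\tfrac{a_2}{2}\ge\pi$ (and its consequence $a_1+\tfrac{3a_2}{2}>\pi$) forces every argument appearing in $\alpha^{1}_{p_kp_l}$, $\alpha^{2}_{q_kq_l}$, $\alpha^{12}_{p_kq_l}$, $\alpha^{12}_{q_kp_l}$ to lie inside these seeds. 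The paper makes this explicit by checking, interval by interval, that the bounds $t>\pi-\tfrac{a_2}{2}$, $t-x<a_1+\tfrac{a_2}{2}$, etc., hold; your order (recover $p$ fully, then $q$) works just as well and is arguably cleaner, but you should drop the Volterra language and state directly that the integrals are already determined by the seeds.
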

\begin{proof} We distinguish cases when the sum of delays is greater or less than $\pi$. 
\\\\
1. Let $a_1+a_2\geq \pi.$ Then we have 
\[\frac{\pi}{3}\leq a_1<\frac{2\pi}{5}<a_2<2a_1<\pi\]
or 
\[\frac{\pi}{3}\leq a_1<\frac{2\pi}{5}<2a_1<a_2<\pi.\]
We will prove the theorem for the first case, i.e. assuming that $a_2<2a_1,$ while the proof for the second case differs only in the order of the intervals on which functions will be determined. From (\ref{14s}) and (\ref{14t}),  for $x\in (\frac{a_{1}}{2},\frac{a_{2}}{2})\cup (\pi-\frac{a_{2}}{2},\pi-\frac{a_{1}}{2}),$  we have  
\begin{equation*}
\begin{split}
p_{1}(x)=K^{0}(x-\frac{a_{1}}{2}), \;\;\;\;\;p_{2}(x)=G^{0}(x-\frac{a_{1}}{2})
\end{split}    
\end{equation*}
i.e. we determine functions
\begin{equation*}
\begin{split}
p_{1}(x),\hspace {4mm} p_{2}(x),
\hspace {2mm}
x\in (a_{1},\frac{a_{1}+a_{2}}{2})\cup (\pi-\frac{a_{2}}{2}+\frac{a_{1}}{2},\pi).
\end{split}
\end{equation*}
If  $  x\in (\frac{a_{2}}{2},a_1)\cup(\pi-a_1,\pi-\frac{a_{2}}{2}),$  functions $K^m(x)$ and $G^m(x)$ have the form 
\begin{equation*}\
\begin{split}
&K^{m}(x)={(-1)}^{m}p_{1}(x+\frac{a_{1}}{2})+q_{1}(x+\frac{a_{2}}{2})
\end{split}
\end{equation*}
and
\begin{equation*}\
\begin{split}
&G^{m}(x)={(-1)}^{m}p_{2}(x+\frac{a_{1}}{2})+q_{2}(x+\frac{a_{2}}{2}).
\end{split} 
\end{equation*}
Then we recover functions 
\begin{equation*}
\begin{split}
p_{1}(x),\hspace {4mm} p_{2}(x),
\hspace {2mm}
x\in (\frac{a_{1}+a_{2}}{2},\frac{3a_1}{2})\cup(\pi-\frac{a_{1}}{2}, \pi-\frac{a_{2}}{2}+\frac{a_{1}}{2})
\end{split}
\end{equation*}
by formulas  (\ref{23}) for $A_1(x)=A_2(x)=0$, as well as functions
\begin{equation*}
\begin{split}
q_{1}(x),\hspace {4mm} q_{2}(x),
\hspace {2mm}
x\in (a_{2},a_1+\frac{a_2}{2})\cup(\pi-a_1+\frac{a_{2}}{2},\pi)
\end{split}
\end{equation*}
by formulas (\ref{24}) for $B_1(x)=B_2(x)=0.$ Finally, from (\ref{14s}) and (\ref{14t}) we obtain that for $x\in ({a_{1}},\pi-{a_{1}})$ functions $K^m(x)$ and $G^m(x)$ have the form 
\begin{equation}\label{57a}
\begin{split}
&K^{m}(x)={(-1)}^{m}p_{1}(x+\frac{a_{1}}{2})+q_{1}(x+\frac{a_{2}}{2})-
\alpha_{p_{1}p_{2}}^{1}(x)+
\alpha_{p_{2}p_{1}}^{1}(x)  
\end{split}
\end{equation}
\begin{equation}\label{57b}
\begin{split}
&G^{m}(x)={(-1)}^{m}p_{2}(x+\frac{a_{1}}{2})+q_{2}(x+\frac{a_{2}}{2})-
\alpha_{p_{1}p_{1}}^{1}(x)-
\alpha_{p_{2}p_{2}}^{1}(x).
\end{split}   
\end{equation}
Then we recover functions 
\begin{equation*}
\begin{split}
p_{1}(x),\hspace {4mm} p_{2}(x),
\hspace {2mm}
x\in (\frac{3a_{1}}{2},\pi-\frac{a_{1}}{2})
\end{split}
\end{equation*}
by formulas (\ref{23}) for $A_1(x)=A_2(x)=0.$
In this way functions  $p_{1}(x)$ and $p_{2}(x)$ are recovered on  $(a_{1},\pi).$ Then integrals  $\alpha_{p_{k}p_{l}}^{1}(x), k,l\in\{1,2\}$ are known, too. Now,  using formulas (\ref{24}) for $B_1(x)=\alpha_{p_{1}p_{2}}^{1}(x)-
\alpha_{p_{2}p_{1}}^{1}(x)$ and $B_2(x)=\alpha_{p_{1}p_{1}}^{1}(x)+
\alpha_{p_{2}p_{2}}^{1}(x),$ we determine functions
\begin{equation*}
\begin{split}
q_{1}(x),\hspace {4mm} q_{2}(x),
\hspace {2mm}
x\in (a_1+\frac{a_2}{2},\pi-a_1+\frac{a_{2}}{2}),
\end{split}
\end{equation*}
so they are also completely recovered on $(a_2,\pi).$
\\\\
2. Let us now consider the case $a_{1}+a_{2}<\pi.$ Taking  the definition of the subset $R_1$ into account, we have 
\[\frac{2\pi}{5}<a_2<\frac{2\pi}{3}<2a_1. \footnote {The order of delays $2a_{1}<a_{2}<a_{1}+a_{2}<\pi$ is not possible since $2a_{1}<a_{2}\implies \pi<3a_{1}<a_{1}+a_{2} $}\] 
At the beginning, in the same way as in the proof of previous case, for $x\in (\frac{a_{1}}{2},\frac{a_{2}}{2})\cup (\pi-\frac{a_{2}}{2},\pi-\frac{a_{1}}{2})$ and  $  x\in (\frac{a_{2}}{2},a_1)\cup(\pi-a_1,\pi-\frac{a_{2}}{2}),$  using formulas (\ref{23}) and (\ref{24}),  we determine functions 
\begin{equation*}
\begin{split}
p_{1}(x),\hspace {4mm} p_{2}(x),
\hspace {2mm}
x\in (a_{1},\frac{3a_{1}}{2})\cup (\pi-\frac{a_{1}}{2},\pi)
\end{split}
\end{equation*}
and functions
\begin{equation*}
\begin{split}
q_{1}(x),\hspace {4mm} q_{2}(x),
\hspace {2mm}
x\in (a_2,a_{1}+\frac{a_{2}}{2})\cup (\pi-a_1+\frac{a_{2}}{2},\pi). 
\end{split}
\end{equation*}
For $x\in (a_1,\frac{a_{1}+a_2}{2})\cup (\pi-\frac{a_{1}+a_2}{2},\pi-a_1)$ functions $K^m(x)$ and $G^m(x)$ have the form (\ref{57a}) and (\ref{57b})  and we determine functions 
\begin{equation*}
\begin{split}
p_{1}(x),\hspace {4mm} p_{2}(x),
\hspace {2mm}
x\in (\frac{3a_{1}}{2},a_1+\frac{a_{2}}{2})\cup(\pi-\frac{a_{2}}{2}, \pi-\frac{a_{1}}{2})
\end{split}
\end{equation*}
by formulas (\ref{23}) for $A_1(x)=A_2(x)=0.$ In order to determine functions $q_1(x), q_2(x)$, it is needed to show that integrals $\alpha_{p_{k}p_{l}}^{1}(x), k,l\in \{1,2\},$ are known. For arguments of subintegral functions $p_1(x), p_2(x)$ is valid
\[t>x+a_{1}>2a_1 > \pi-\frac{a_{2}}{2}, \]
\[t-x<\pi-a_1<a_1+\frac{a_{2}}{2} \]
due to the assumption 
\[2a_1+\frac{a_2}{2}\geq \pi.\]
Therefore, arguments of subintegral functions $p_{1}(x), p_{2}(x)$ belong to the interval 
$ (a_{1},a_1+\frac{a_{2}}{2})\cup (\pi-\frac{a_{2}}{2},\pi),$ hence integrals $\alpha_{p_{k}p_{l}}^{1}(x) $  are known.Then, from (\ref{24}) for $B_1(x)=\alpha_{p_{1}p_{2}}^{1}(x)-
\alpha_{p_{2}p_{1}}^{1}(x)$ and $B_2(x)=\alpha_{p_{1}p_{1}}^{1}(x)+
\alpha_{p_{2}p_{2}}^{1}(x),$ we can determine functions 
\begin{equation*}
\begin{split}
q_{1}(x),\hspace {4mm} q_{2}(x),
\hspace {2mm}
x\in (a_1+\frac{a_2}{2}, a_2+\frac{a_1}{2})\cup(\pi-\frac{a_{1}}{2}, \pi-a_1+\frac{a_{2}}{2}).
\end{split}
\end{equation*}
In the following we have in mind that the functions $p_1(x)$ and $p_2(x)$ are recovered on the interval $(a_1, a_1+\frac{a_2}{2})\cup(\pi-\frac{a_2}{2}, \pi)$ and  functions $q_1(x)$ and $q_2(x)$ on the interval $(a_2, a_2+\frac{a_1}{2})\cup(\pi-\frac{a_1}{2}, \pi). $ We differ two cases, depending on the condition whether the second delay is grater or less of $\frac{\pi}{2}.$ 
\\\\
2.1.  Let $\frac{\pi}{2}\leq a_2<\frac{2\pi}{3}.$ Then it remains to recover matrix-functions $P(x)$ and $Q(x)$ on the interval $  (\frac{a_1+a_{2}}{2},\pi-\frac{a_1+a_{2}}{2})$. We have
\begin{equation}\label{700}
\begin{split}
&K^{m}(x)={(-1)}^{m}p_{1}(x+\frac{a_{1}}{2})+q_{1}(x+\frac{a_{2}}{2})-
\alpha_{p_{1}p_{2}}^{1}(x)+
\alpha_{p_{2}p_{1}}^{1}(x)\\
&+(-1)^{m}\alpha_{p_{2}q_{1}}^{12}(x)-
(-1)^{m}\alpha_{p_{1}q_{2}}^{12}(x)+
(-1)^{m}\alpha_{q_{2}p_{1}}^{12}(x)-
(-1)^{m}\alpha_{q_{1}p_{2}}^{12}(x)   
\end{split}
\end{equation}
and
\begin{equation}\label{770}
\begin{split}
&G^{m}(x)={(-1)}^{m}p_{2}(x+\frac{a_{1}}{2})+q_{2}(x+\frac{a_{2}}{2})-
\alpha_{p_{1}p_{1}}^{1}(x)-
\alpha_{p_{2}p_{2}}^{1}(x)\\
&-(-1)^{m}\alpha_{p_{1}q_{1}}^{12}(x)-
(-1)^{m}\alpha_{p_{2}q_{2}}^{12}(x)-
(-1)^{m}\alpha_{q_{1}p_{1}}^{12}(x)-
(-1)^{m}\alpha_{q_{2}p_{2}}^{12}(x).
\end{split}   
\end{equation}
One can easily obtain that integrals $\alpha_{p_{k}p_{l}}^{1}(x)$ are known. We have 
\[t>x+a_{1}>\frac{a_{1}+a_{2}}{2}+a_{1}> \pi-\frac{a_{2}}{2}, \]
\[t-x<\pi-\frac{a_{1}+a_{2}}{2}<a_1+\frac{a_{2}}{2} \]
since
\[\frac{3a_1}{2}+a_2>2a_1+\frac{a_2}{2}>\pi.\]
Then, from (\ref{24})  we can determine functions 
\begin{equation*}
\begin{split}
q_{1}(x),\hspace {4mm} q_{2}(x),
\hspace {2mm}
x\in (a_2+\frac{a_{1}}{2},\pi-\frac{a_{1}}{2}).
\end{split}
\end{equation*}
In that way functions $q_{1}(x),\hspace {1mm} q_{2}(x)$ are completely recovered on $(a_2,\pi).$ Now it is not difficult to show that "mixed" integrals $\alpha_{p_{k}q_{l}}^{12}(x)$  and $\alpha_{q_{k}p_{l}}^{12}(x) $ are also known. Indeed, for arguments of subintegral functions $p_1(x), p_2(x)$ in "mixed" integrals is valid 
\[t>x+\frac{a_{1}+a_{2}}{2}>a_{1}+a_2> \pi-\frac{a_{2}}{2}, \]
\[t-x-\frac{a_{2}-a_{1}}{2}<\pi-\frac{a_{1}+a_{2}}{2}-\frac{a_{2}-a_{1}}{2}=\pi-a_2<a_1+\frac{a_{2}}{2} \]
since
\[a_{1}+\frac{3a_{2}}{2}>2a_1+\frac{a_2}{2}>\pi. \]
Therefore, arguments of subintegral functions $p_{1}(x), p_{2}(x)$ belong to the interval 
$ (a_{1},a_1+\frac{a_{2}}{2})\cup (\pi-\frac{a_{2}}{2},\pi)$ and "mixed" integrals are known.
Then, using formulas (\ref{23}),  we can determine functions 
\begin{equation*}
\begin{split}
p_{1}(x),\hspace {4mm} p_{2}(x),
\hspace {2mm}
x\in (a_1+\frac{a_2}{2},\pi-\frac{a_{2}}{2})
\end{split}
\end{equation*}
so they are also completely recovered on $(a_1,\pi).$
\\\\
2.2.  Let $\frac{2\pi}{5}\leq a_2<\frac{\pi}{2}.$  It remains to show that theorem of uniqueness is valid on the intervals $ (\frac{a_1+a_{2}}{2},a_2)\cup (\pi-a_2, \pi-\frac{a_1+a_{2}}{2})$ and $(a_2,\pi-a_2). $ 
On the interval $  (\frac{a_1+a_{2}}{2},a_2)\cup (\pi-a_2, \pi-\frac{a_1+a_{2}}{2})$ functions $K^{m}(x)$ and  $G^{m}(x)$ have the form (\ref{700}) and (\ref{770}) respectively.  In the same way as in the proof for the case 2.1, one can show that integrals $\alpha_{p_{k}p_{l}}^{1}(x)$ are known, so we can determine functions 
\begin{equation*}
\begin{split}
q_{1}(x),\hspace {4mm} q_{2}(x),
\hspace {2mm}
x\in (a_2+\frac{a_{1}}{2},\frac{3a_2}{2})\cup(\pi-\frac{a_{2}}{2}, \pi-\frac{a_{1}}{2}) 
\end{split}
\end{equation*}
by formulas (\ref{24}). Now we will show that "mixed" integrals $\alpha_{p_{k}q_{l}}^{12}(x)$  and $\alpha_{q_{k}p_{l}}^{12}(x)$ are also known. In the same way as in previous case we show that arguments of subintegral functions $p_{1}(x), p_{2}(x)$ belong to the interval 
$ (a_{1},a_1+\frac{a_{2}}{2})\cup (\pi-\frac{a_{2}}{2},\pi)$. For arguments of subintegral functions $q_{1}(x), q_{2}(x)$  in "mixed" integrals we have
\[t>x+\frac{a_{1}+a_{2}}{2}>a_1+a_2> \pi-\frac{a_{2}}{2}, \]
\[t-x-\frac{a_{1}-a_{2}}{2}<\pi-\frac{a_{1}+a_{2}}{2}-\frac{a_{1}-a_{2}}{2}=\pi-a_1<\frac{3a_{2}}{2} \]
since
\[a_{1}+\frac{3a_{2}}{2}>2a_1+\frac{a_2}{2}> \pi. \]
Then, using formulas (\ref{23}), we can determine functions 
\begin{equation*}
\begin{split}
p_{1}(x),\hspace {4mm} p_{2}(x),
\hspace {2mm}
x\in (a_1+\frac{a_2}{2},a_2+\frac{a_1}{2})\cup(\pi-a_2+\frac{a_{1}}{2},\pi-\frac{a_{2}}{2}).
\end{split}
\end{equation*}
Let us finally consider the interval $ (a_2,\pi-a_2)$. Then functions $K^{m}(x), G^{m}(x)$ have the form
\begin{equation}\label{788}
\begin{split}
K^{m}(x)=&{(-1)}^{m}p_{1}(x+\frac{a_{1}}{2})+q_{1}(x+\frac{a_{2}}{2})-
\alpha_{p_{1}p_{2}}^{1}(x)\\
&+\alpha_{p_{2}p_{1}}^{1}(x)-\alpha_{q_{1}q_{2}}^{2}(x)+\alpha_{q_{2}q_{1}}^{2}(x)+
(-1)^{m}\alpha_{p_{2}q_{1}}^{12}(x)\\
&-(-1)^{m}\alpha_{p_{1}q_{2}}^{12}(x)+
(-1)^{m}\alpha_{q_{2}p_{1}}^{12}(x)-
(-1)^{m}\alpha_{q_{1}p_{2}}^{12}(x)
\end{split}
\end{equation}
and
\begin{equation}\label{789}
\begin{split}
G^{m}(x)=&{(-1)}^{m}p_{2}(x+\frac{a_{1}}{2})+q_{2}(x+\frac{a_{2}}{2})-
\alpha_{p_{1}p_{1}}^{1}(x)\\
&-\alpha_{p_{2}p_{2}}^{1}(x)-\alpha_{q_{1}q_{1}}^{2}(x)-\alpha_{q_{2}q_{2}}^{2}(x)-(-1)^{m}\alpha_{p_{1}q_{1}}^{12}(x)\\
&-(-1)^{m}\alpha_{p_{2}q_{2}}^{12}(x)-
(-1)^{m}\alpha_{q_{1}p_{1}}^{12}(x)-
(-1)^{m}\alpha_{q_{2}p_{2}}^{12}(x).
\end{split}   
\end{equation}
Let us show that integrals $\alpha_{p_{k}p_{l}}^{1}(x),$ as well as integrals $\alpha_{q_{k}q_{l}}^{2}(x),$ are known. For arguments of subintegral functions $p_1(x), p_2(x)$ we have
\[t>x+a_1>a_1+a_2> \pi-a_2 +\frac{a_{1}}{2}, \]
\[t-x<\pi-a_2<a_2+\frac{a_{1}}{2} \]
since
\[2a_2+\frac{a_1}{2}>2a_1+\frac{a_2}{2}> \pi. \]
For arguments of subintegral functions $q_1(x), q_2(x)$ is valid
\[t>x+a_2>2a_2> \pi-\frac{a_{2}}{2}, \]
\[t-x<\pi-a_2<\frac{3a_{2}}{2} \]
since
\[\frac{5a_2}{2}>\pi. \]
In that way,  using formulas (\ref{24}), we can recover functions 
\begin{equation*}
\begin{split}
q_{1}(x),\hspace {4mm} q_{2}(x),
\hspace {2mm}
x\in (\frac{3a_{2}}{2},\pi-\frac{a_{2}}{2})
\end{split}
\end{equation*}
so they are completely recovered on $(a_2,\pi)$.  It remains to  show that "mixed" integrals $\alpha_{p_{k}q_{l}}^{1}(x)$  and $\alpha_{q_{k}p_{l}}^{1}(x)$ are also known.  Due to the condition $\frac{5a_2}{2}>\pi,$ for arguments of subintegral functions $p_1(x), p_2(x)$ is valid 
\[t>x+\frac{a_{1}+a_{2}}{2}>a_2+\frac{a_{1}+a_{2}}{2}> \pi-a_2+\frac{a_{1}}{2}, \]
\[t-x-\frac{a_{2}-a_{1}}{2}<\pi-a_2-\frac{a_{2}-a_{1}}{2}<a_2+\frac{a_{1}}{2}. \]
Then we can determine functions 
\begin{equation*}
\begin{split}
p_{1}(x),\hspace {4mm} p_{2}(x),
\hspace {2mm}
x\in (a_2+\frac{a_{1}}{2},\pi-a_2+\frac{a_{1}}{2}) 
\end{split}
\end{equation*}
by formulas (\ref{23}), and they are also completely recovered on $(a_1,\pi).$ 
Theorem is proved.
\end{proof}
Now we will show that theorem of uniqueness does not hold on the subset $R_2.$
For that purpose let us for fixed $a_{1},a_{2}$ and $x\in (a_{1}+\frac{a_2}{2},\pi-a_{1})$ define the integral operator 
\begin{equation}\label{400}
\begin{split}
M(f(x))=\int_{a_{1}+\frac{a_2}{2}}^{\pi-x+\frac{a_{2}}{2}} f(t) h(t+x-\frac{a_{2}}{2}) dt \;\;
\end{split}
\end{equation}
for some non-zero real function $h(x) \in L_{2}(2a_1+\frac{a_2}{2},\pi).$ Operator $M(f(x))$ is self-adjoint since 
\[\int_{a_{1}+\frac{a_2}{2}}^{\pi-a_1}M(f(x))g(x)dx=
\int_{a_{1}+\frac{a_2}{2}}^{\pi-a_1}\bigg(\int_{a_{1}+\frac{a_2}{2}}^{\pi-x+\frac{a_{2}}{2}} f(t) h(t+x-\frac{a_{2}}{2}) dt\bigg)g(x)dx\]
\[=\int_{a_{1}+\frac{a_2}{2}}^{\pi-a_1}f(t)\bigg(\int_{a_{1}+\frac{a_2}{2}}^{\pi-t-\frac{a_{2}}{2}}  h(t+x-\frac{a_{2}}{2})g(x)dx\bigg)dt=\int_{a_{1}+\frac{a_2}{2}}^{\pi-a_1}f(t)M(g(t))dt.\]
\\
We can choose function $h(x)$ such that the operator $M(f(x))$ has eigenvalue $\eta_{1}=1$  with corresponding eigenfunction $e_1(x)$ (see \cite{DB21}), i.e.
\begin{equation}\label{301}
\begin{split}
M(e_1(x))=e_1(x),\;\; x\in (a_{1}+\frac{a_2}{2},\pi-a_{1}).
\end{split}
\end{equation}
We construct the family of functions
\begin{equation*}
\begin{split}
D_\beta=\{p_1(x), p_2^\beta(x), q_1(x),q_2^\beta(x):\beta\in C\}
\end{split}
\end{equation*}
where
\begin{equation*}\
p_1(x)=0,\;x\in (0,\pi),
\end{equation*}
\\
\begin{equation}\label{652}
p_2^\beta(x)=\left\{\begin{array}{rl} 0,\;\;  x\in (0,a_{1}+\frac{a_2}{2})\cup (\pi-a_1,2a_1+\frac{a_2}{2})\\ 
\beta e_{1}(x),\;\;  x\in (a_{1}+\frac{a_2}{2},\pi-a_1) \\
h(x),\;\; x\in (2a_1+\frac{a_2}{2},\pi)\end{array}\right.   
\end{equation} 
\\
\begin{equation*}\
q_1(x)=0, \;\;\;x\in (0,\pi), 
\end{equation*}
\\
\begin{equation}\label{654}
q_2^\beta(x)=\left\{\begin{array}{rl} 0,\;\;  x\in (0,a_{1}+\frac{a_2}{2})\cup (\pi-a_1,\pi)\\ 
\beta e_{1}(x),\;\;  x\in (a_{1}+\frac{a_2}{2},\pi-a_1). \end{array}\right.   
\end{equation} 
\\
Using this family of functions, we will prove that the solution of Inverse problem 1 is not unique if $2a_1+\frac{a_2}{2}<\pi.$ Let us for this purpose denote 
\[P_{\beta }(x)=\left[\begin{array}{cccc}
0 & p_{2}^\beta(x)\\
p_{2}^\beta(x) & 0
\end{array}
\right],\hspace{2mm}
Q_\beta(x)=\left[\begin{array}{cccc}
0 & q_2^\beta(x)\\
q_2^\beta(x) & 0
\end{array}
\right].\]
\begin{theorem} Let $\frac{\pi}{3}\leq a_1<\frac{2\pi}{5},\; \frac{\pi}{3}\leq a_{2} <\frac{2\pi}{3},\; a_1<a_2$ and  $2a_1+\frac{a_2}{2}<\pi.$ The spectra $(\lambda_{n,j}^{m})$  of BVPs $D_{j}(P_{\beta},Q_{\beta},m) $ is independent of $\beta$ . 
\end{theorem}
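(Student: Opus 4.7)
The plan is to show that the characteristic functions $\Delta_j^m(\lambda)$ are independent of $\beta$; since the spectra are the zero sets of these entire functions, that will yield the claim. By (\ref{14}) and (\ref{15}) it suffices to check that both $K^m(x)$ and $G^m(x)$ do not depend on $\beta$. Every summand defining $K_1^m, K_2, K_{12}^m, K_{21}^m$ contains either $p_1$ or $q_1$, and $p_1\equiv q_1\equiv 0$ throughout the family $D_\beta$; thus $K^m(x)\equiv 0$ identically, and only $G^m$ needs to be controlled.

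I would write $p_2^\beta=\beta e_1\mathbbm{1}_{I_1}+h\mathbbm{1}_{I_2}$ and $q_2^\beta=\beta e_1\mathbbm{1}_{I_1}$ with $I_1=(a_1+\tfrac{a_2}{2},\pi-a_1)$ and $I_2=(2a_1+\tfrac{a_2}{2},\pi)$. The hypothesis $a_1\geq \pi/3$ forces $\pi-a_1\leq 2a_1<2a_1+\tfrac{a_2}{2}$, so $I_1$ and $I_2$ are disjoint with gap exactly $3a_1+\tfrac{a_2}{2}-\pi>0$. Expanding the four bilinear integrals $\alpha_{p_2 p_2}^1,\alpha_{q_2 q_2}^2,\alpha_{p_2 q_2}^{12},\alpha_{q_2 p_2}^{12}$ appearing in $G^m$ as polynomials in $\beta$ gives
\[G^m(x)=G^m_{[0]}(x)+\beta\,G^m_{[1]}(x)+\beta^2\,G^m_{[2]}(x),\]
and the task reduces to showing $G^m_{[1]}\equiv 0$ and $G^m_{[2]}\equiv 0$.

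The quadratic coefficient is a sum of four contributions of the form $\int e_1(\cdot)\,e_1(\cdot)\,dt$ over sets where both arguments lie in $I_1$. Bookkeeping the shifted lower limits ($+a_1$, $+a_2$, or $+\tfrac{a_1+a_2}{2}$) against the right endpoint $\pi-a_1$ of $I_1$ and against the outer $x$-domain $(a_1,\pi-a_1)$, $(a_2,\pi-a_2)$, or $(\tfrac{a_1+a_2}{2},\pi-\tfrac{a_1+a_2}{2})$ of the corresponding $\alpha$-integral, the hypotheses $a_1\geq\pi/3$ (so $3a_1\geq\pi$) and $a_2\geq\pi/3$ force the feasible $x$-set to be empty in each of the four cases, so $G^m_{[2]}\equiv 0$. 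For the linear coefficient, the direct pieces from $p_2^\beta(x+\tfrac{a_1}{2})$ and $q_2^\beta(x+\tfrac{a_2}{2})$ are $(-1)^m e_1(x+\tfrac{a_1}{2})\mathbbm{1}_{x+a_1/2\in I_1}$ and $e_1(x+\tfrac{a_2}{2})\mathbbm{1}_{x+a_2/2\in I_1}$; the $\alpha$-integrals produce cross terms pairing $e_1$ with $h$. Two of these cross terms (from $\alpha_{p_2 p_2}^1$ in the configuration $t\in I_2$, $t-x\in I_1$, and from $\alpha_{p_2 q_2}^{12}$) reduce, via the substitutions $u=t-x$ and $u=t-x-\tfrac{a_1-a_2}{2}$ respectively and a careful matching of integration bounds with (\ref{400}), to $M(e_1)(x+\tfrac{a_2}{2})$ and $M(e_1)(x+\tfrac{a_1}{2})$; invoking (\ref{301}) they equal $e_1(x+\tfrac{a_2}{2})$ and $e_1(x+\tfrac{a_1}{2})$ on exactly the supports of the two direct pieces and enter $G^m$ with the opposite sign, producing the cancellation. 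The two remaining cross terms (from $\alpha_{p_2 p_2}^1$ with $t\in I_1$, $t-x\in I_2$, and from $\alpha_{q_2 p_2}^{12}$ analogously) would require traversing the $I_1$--$I_2$ gap by a shift at most $x$, which the explicit positive gap $3a_1+\tfrac{a_2}{2}-\pi$ forbids; hence they vanish identically, and $G^m_{[1]}\equiv 0$.

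The main obstacle will be the systematic bookkeeping of integration limits, since the $x$-support of each $\alpha$-integral after substitution depends delicately on $a_1,a_2$ and must be matched against the support conditions $I_1,I_2$ coming from $p_2^\beta, q_2^\beta$. The pivotal non-trivial step is the change-of-variable argument that identifies the two surviving cross integrals with $M(e_1)$ on the correct intervals, so that the eigenvalue equation (\ref{301}) yields the cancellations in $G^m_{[1]}$; the regime constraints then produce the emptiness of the integration domains for $G^m_{[2]}$. This gives $G^m=G^m_{[0]}$ independent of $\beta$, hence $\beta$-independent $\Delta_j^m$ and spectra $\{\lambda_{n,j}^m\}$.
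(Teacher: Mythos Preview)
Your proposal is correct and follows essentially the same route as the paper: both reduce to showing $G^m$ is $\beta$-independent by using the eigenfunction identity $M(e_1)=e_1$ to cancel the direct $\beta e_1$ contributions against the $e_1$--$h$ cross integrals, together with support analysis (driven by $3a_1\ge\pi$ and the $I_1$/$I_2$ gap) to kill the remaining $\beta$-dependent pieces. The only difference is packaging---the paper writes the cancellations as $p_2^\beta-\alpha^{12}_{p_2q_2}(\cdot-\tfrac{a_1}{2})=0$ and $q_2^\beta-\alpha^{1}_{p_2p_2}(\cdot-\tfrac{a_2}{2})=0$ on $I_1$, while you organize the same computation as vanishing of the $\beta$- and $\beta^2$-coefficients of $G^m$.
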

\begin{proof}
Notice that in this case obviously $a_1+a_2<\pi$ and functions $K^m(x)$ and $G^m(x)$ have the form  (\ref{700}) and (\ref{770}) or (\ref{788}) and (\ref{789}) respectively, depending on the condition whether the second delay is less or not of $\frac{\pi}{2}$. Taking into account that  $p_1(x)=q_1(x)=0$, as well as that function  $q_2^\beta(x)$ is vanishing for $x\in(\pi-a_1,\pi),$ we obtain that 
\begin{equation}\label{388}
\begin{split}
K^{m}(x)=0, \;\; x\in (0, \pi),
\end{split}
\end{equation}
while in functions $G^m(x),$ besides functions $p_2(x)$ and $q_2(x),$ only integrals $\alpha_{p_{2}p_{2}}^{1}(x)$ and $\alpha_{p_{2}q_{2}}^{12}(x)$  remain.  
Since
\[p_2(x+\frac{a_1}{2})=0, \;x\in(0, \frac{a_1+a_2}{2})\cup(\pi-\frac{3a_1}{2},\frac{3a_1}{2}+\frac{a_2}{2})\] and 
\[q_2(x+\frac{a_2}{2})=0, \;x\; \in(0, a_1)\cup(\pi-a_1-\frac{a_2}{2},\pi-\frac{a_2}{2})\]
we obtain 
\[\alpha_{p_{2}p_{2}}^{1}(x)=0, \;\;x\in(\pi-a_1-\frac{a_2}{2},\pi-a_1)\] and
\[\alpha_{p_{2}q_{2}}^{12}(x)=0, \;x\in(\pi-\frac{3a_1}{2},\pi-\cfrac{a_1+a_2}{2}).\] 
Indeed, for $p_2(x)$  subintegral function's argument in integral $\alpha_{p_{2}p_{2}}^{1}(x)$ on the interval $(\pi-a_1-\frac{a_2}{2},\pi-a_1)$ is valid 
\[t-x<\pi-\pi+a_1+\frac{a_2}{2}=a_1+\frac{a_2}{2},\]
while $q_2(x)$  function's argument in  $\alpha_{p_{2}q_{2}}^{12}(x)$ on the interval $(\pi-\frac{3a_1}{2},\pi-\frac{a_1+a_2}{2})$ satisfies condition
\[t-x-\frac{a_1-a_2}{2}<\pi-\pi+\frac{3a_1}{2}-\frac{a_1-a_2}{2}=a_1+\frac{a_2}{2}.\]
Then we  have
\begin{equation}\label{389}
\begin{split}
G^{m}(x)&=0, \;\; x\in (\frac{a_1}{2}, a_1)\cup (\pi-\frac{3a_1}{2},\frac{3a_1}{2}+\frac{a_2}{2}),\\
G^{m}(x)&=(-1)^{m}\bigg(p_{2}(x+\frac{a_1}{2})-\alpha_{p_{2}q_{2}}^{12}(x)\bigg) \mathbbm{1}_{(\frac{a_1+a_2}{2},\pi-\frac{3a_1}{2})} (x)\\
+&\bigg (q_{2}(x+\frac{a_2}{2})- \alpha_{p_{2}p_{2}}^{1}(x)\bigg) \mathbbm{1}_{(a_{1},\pi-a_{1}-\frac{a_2}{2})} (x),\;\;  x\in (a_1 ,\pi-\frac{3a_1}{2}), \\
G^{m}(x)&=(-1)^{m}p_{2}(x+\frac{a_{1}}{2}),\;\;\;x\in (\frac{3a_1}{2}+\frac{a_2}{2},\pi-\frac{a_1}{2}).
\end{split}
\end{equation}
Then from (\ref{14}), (\ref{388}) and (\ref{389}), we obtain that characteristic functions $\Delta_{1}^{m}(\lambda)$ for family of functions $D_\beta$ have the form
\begin{equation*}
\begin{split}
\Delta_{1}^{m}(\lambda)=&\sin \lambda \pi + \int\limits_{\frac{a_{1}}{2}}^{\pi-\frac{a_{1}}{2}} K^{m}(x) \cos \lambda (\pi-2x)dx -
\int\limits_{\frac{a_{1}}{2}}^{\pi-\frac{a_{1}}{2}} G^{m}(x) \sin \lambda (\pi-2x)dx\\
=&\sin \lambda \pi-(-1)^{m}\int\limits_{\frac{a_{1}+a_2}{2}}^{\pi-\frac{3a_{1}}{2}} \bigg(p_{2}^\beta(x+\frac{a_1}{2})-\alpha_{p_{2}^\beta q_{2}^\beta}^{12}(x)\bigg) \sin \lambda (\pi-2x)dx\\
&-\int\limits_{a_1}^{\pi-a_1-\frac{a_2}{2}} \bigg(q_{2}^\beta(x+\frac{a_2}{2})\-\alpha_{p_{2}^\beta p_{2}^\beta}^{1}(x)\bigg) \sin \lambda (\pi-2x)dx \\
&-(-1)^{m}\int\limits_{\frac{3a_{1}}{2}+\frac{a_2}{2}}^{\pi-\frac{a_{1}}{2}} p_2^\beta(x+\frac{a_1}{2}) \sin \lambda (\pi-2x)dx\\
=&\sin \lambda \pi-(-1)^{m}\int\limits_{a_1+\frac{a_2}{2}}^{\pi-a_1} \bigg(p_{2}^\beta(x)-\alpha_{p_{2}^\beta q_{2}^\beta}^{12}(x-\frac{a_1}{2})\bigg) \sin \lambda (\pi-2x+a_1)dx
\end{split}
\end{equation*}
\[\begin{split}
&-\int\limits_{a_1+\frac{a_2}{2}}^{\pi-a_1} \bigg(q_{2}^\beta(x)-\alpha_{p_{2}^\beta p_{2}^\beta}^{1}(x-\frac{a_2}{2})\bigg) \sin \lambda (\pi-2x+a_2)dx\\
&-(-1)^{m}\int\limits_{2a_1+\frac{a_2}{2}}^{\pi} p_2^\beta(x) \sin \lambda (\pi-2x+a_1)dx.
\end{split}\]
Let us show that 
\begin{equation*}
 U(x)=p_{2}^\beta(x)-\alpha_{p_{2}^\beta q_{2}^\beta}^{12}(x-\frac{a_1}{2})=0,\;\; x\in (a_1+\frac{a_{2}}{2},\pi-a_1).   
\end{equation*}
Using the definition and properties of the integral operator $M(f)$ from (\ref{400}) and (\ref{301}), as well as  the form of functions $p_1^\beta(x)$ and $q_2^\beta (x)$ from (\ref{652}) and (\ref{654}), we obtain
\[\begin{split}
U(x)=&p_{2}^\beta(x)-\int\limits_{x+\frac{a_{2}}{2}}^{\pi} p_{2}^\beta(t)q_{2}^\beta(t-x+\frac{a_{2}}{2})dt\\
=&p_{2}^\beta(x)-\int\limits_{a_{2}}^{\pi-x+\frac{a_{2}}{2}} p_{2}^\beta (s+x-\frac{a_{2}}{2})q_{2}^\beta(s)ds\\=&p_{2}^\beta(x)-\int\limits_{a_1+\frac{a_2}{2}}^{\pi-x+\frac{a_{2}}{2}} q_{2}^\beta(s)p_{2}^\beta (s+x-\frac{a_{2}}{2})ds. 
\end{split}\]
Since 
\[\pi-x+\frac{a_2}{2}<\pi-a_1\;\wedge\;s+x-\frac{a_2}{2}>2a_1+\frac{a_2}{2}\]
we obtain 
\[\begin{split}
U(x)=&\beta e_{1}(x)-\int\limits_{a_{1}+\frac{a_2}{2}}^{\pi-x+\frac{a_{2}}{2}} \beta e_{1}(s)h(s+x-\frac{a_{2}}{2})ds\\
=&\beta e_{1}(x)-\beta M(e_{1}(x))=\beta e_{1}(x)-\beta e_{1}(x)=0.
\end{split}\]
\\
In the same way one can show that  
\begin{equation*}\
V(x)=q_{2}^\beta(x)-\alpha_{p_{2}^\beta p_{2}^\beta}^{1}(x-\frac{a_2}{2})=0,\;  x\in ( a_1+\frac{a_2}{2},\pi-a_1).    
\end{equation*}
Indeed,  we have
\[\begin{split}
V(x)&=q_{2}^\beta(x)-\int\limits_{x-\frac{a_2}{2}+a_1}^{\pi} p_{2}^\beta(t)p_{2}^\beta(t-x+\frac{a_2}{2})dt\\
&=q_{2}^\beta(x)-\int\limits_{a_{1}}^{\pi-x+\frac{a_2}{2}} p_{2}^\beta(s+x-\frac{a_2}{2})p_{2}^\beta(s)ds\\
&=q_{2}^\beta(x)-\int\limits_{a_{1}+\frac{a_2}{2}}^{\pi-x+\frac{a_2}{2}} p_{2}^\beta(s)p_{2}^\beta(s+x-\frac{a_2}{2})ds\\
&=\beta e_{1}(x)-\int\limits_{a_{1}+\frac{a_2}{2}}^{\pi-x+\frac{a_{2}}{2}} \beta e_{1}(s)h(s+x-\frac{a_{2}}{2})ds\\
&=\beta e_{1}(x)-\beta M(e_{1}(x))=\beta e_{1}(x)-\beta e_{1}(x)=0.  
\end{split}\]
Then we obtain
\[\begin{split}
\Delta_{1}^{m}(\lambda)=\sin \lambda \pi-(-1)^{m}\int\limits_{2a_1+\frac{a_2}{2}}^{\pi} p_2^\beta(x) \sin \lambda (\pi-2x+a_1)dx \\
=\sin \lambda \pi-(-1)^{m}\int\limits_{2a_1+\frac{a_2}{2}}^{\pi} h(x) \sin \lambda (\pi-2x+a_1)dx. 
\end{split}\]
In the same way from  (\ref{15}) we obtain 
\begin{equation*}
\Delta_{2}^{m}(\lambda)=
-\cos \lambda \pi  + (-1)^{m}\int\limits_{2a_1+\frac{a_{2}}{2}}^{\pi} h(x) \cos \lambda (\pi-2x+a_1)dx,
\end{equation*}
i.e. characteristic functions are independent of $\beta.$ Theorem is proved.
\end{proof}
{\bf Remark 3.1.} If we consider the case when the first delay is greater than the second one, then both delays are less then $\frac{2\pi}{5}$ and Theorem of uniqueness does not hold on the triangle which  completes the set $R$  up to the rectangle (Picture 3.1). But if  we do not limit the first delay to the interval $[\frac{\pi}{3},\frac{2\pi}{5})$ and consider the set
\[S=\{(a_1,a_2):\;\frac{\pi}{3}\leq a_2<a_1<\pi\}\] 
it is clear that the theorem of uniqueness holds on the subset 
\[S_1=\{(a_1,a_2)\in S: \;\frac{2\pi}{5} \leq a_2<a_1\}.\]  
On the subset
\[S_2=S\setminus S_1=\{(a_1,a_2) \in S: \frac{\pi}{3}\leq a_1<\pi \wedge \frac{\pi}{3}<a_2<\frac{2\pi}{5} \wedge a_2<a_1\}\]  
the theorem of uniqueness does not hold which follows from the results for Dirac operator with one delay. Indeed, if we take $P(x)=0$ in (\ref{a01}), then we obtain two BVPs with one delay $a_2.$  It is known that inverse problem's solution is not unique in the case $a_2<\frac{2\pi}{5}$  (see \cite{dju}) and then we conclude that the theorem of uniqueness does not hold on the subset $S_2.$
Taking into account this result, as well as the result from the Theorem 3.2, we conclude that position of $(-1)^m$ in  the BVPs setting is very important and significantly affects the size of the set on which the theorem of uniqueness is valid.

\medskip
\noindent
{\bf Biljana Vojvodi\'{c}}\\
Faculty of Mechanical Engineering,\\
University of Banja Luka, \\
Banja Luka, Bosnia and Herzegovina. \\
E-mail: biljana.vojvodic@mf.unibl.org

\medskip
\noindent
{\bf Nebojša Djurić}\\
Faculty of Electrical Engineering,\\
University of Banja Luka, \\
Banja Luka, Bosnia and Herzegovina. \\
E-mail: nebojsa.djuric@etf.unibl.org

\medskip
\noindent
{\bf Vladimir Vladi\v{c}i\'{c}}\\
Faculty of Philosophy,\\
University of East Sarajevo, \\
East Sarajevo, Bosnia and Herzegovina. \\
E-mail: vladimir.vladicic@ff.ues.rs.ba


\begin{thebibliography}{XX}

\bibitem{BondYur18}
N. Bondarenko, V. Yurko, {\it An inverse problem for Sturm--Liouville differential operators with deviating argument}, Applied Mathematics Letters \textbf{83} (2018), 140--144 .

\bibitem{buttt}
S. A. Buterin, {\it On the uniform stability on recovering sine-type functions with asimptotically separated zeros} Math. Notes \textbf{111}, no. 3, (2022), 343--355

\bibitem{but}
S. A. Buterin, N. Djuri\'{c},  {\it Inverse problems for Dirac operators with constant delay: uniqueness, characterization, uniform stability}.
Lobachevskii Journal of Mathematics \textbf{43} (2022), no. 6, 1492-1501.

 \bibitem{butc}
S.A. Buterin, M. A. Malyugina, and C-T. Shieh, {\it An inverse spectral problem for second-order functional-differential pencils with two delays.} Applied Mathematics and Computation \textbf{411} (2021), Paper No. 126475.

\bibitem{ButYur19}
S.A. Buterin, V.A. Yurko, {\it An inverse spectral problem for Sturm--Liouville operators with a large constant delay}, Anal. Math. Phys. \textbf{9} (2019), no. 1, 17--27.

\bibitem{DB21}
N. Djuri\'{c}, S. Buterin, {\it On an open question in recovering Sturm--Liouville-type operators with delay}, Applied Mathematics Letters \textbf{113} (2021), Paper No. 106862.

\bibitem{DB21-2}
N. Djuri\'{c}, S. Buterin, {\it On non-uniqueness of recovering Sturm--Liouville operators with delay}, Commun. Nonlinear Sci. Numer. Simulat. \textbf{102} (2021), Paper No. 105900.

\bibitem{DB22}
N. Djuri\'{c}, S. Buterin, {\it Iso-bispectral potentials for Sturm-Liouville-type operators with small delay}, Nonlin. Analysis: Real World Appl. \textbf{63} (2022), Paper No. 103390.

\bibitem{DV19}
N. Djuri\'{c}, V. Vladi\v{c}i\'{c}, {\it Incomplete inverse problem for Sturm--Liouville type differential equation with constant delay}, Results in Mathematics \textbf{74} (2019), Paper No. 161.

 \bibitem{dju}
 N. Djuric, B. Vojvodic, {\it Inverse problem for Dirac operators with a constant delay less than half the lenght of the interval}
 Applicable Analysis and Discrete Mathematics, \textbf{17} (2023), no. 1, 249-261.

 \bibitem{FrYur12}
G. Freiling, V.A. Yurko, {\it Inverse problems for Sturm--Liouville differential operators with a constant delay}, Applied Mathematics Letters \textbf{25} (2012), no. 11, 1999--2004.

\bibitem{Mysh}
A.D. Myshkis, {\it Linear Differential Equations with a Delay Argument}, "Nauka", Moscow, 1951.

\bibitem{Nor}
S.B. Norkin, {\it Second Order Differential Equations with a Delay Argument}, "Nauka", Moscow, 1965.

\bibitem{ppv} 
 N. Pavlović, M. Pikula, B. Vojvodić, {\it First regularized trace of the limit assignment of Sturm-Liouville type with two constant delays}, Filomat \textbf{29.1} (2015), no. 1, 51–-62. 

 \bibitem{Pik91} M. Pikula, {\it Determination of a Sturm--Liouville-type differential operator with delay argument from two spectra}, Mat.
Vesnik \textbf{43} (1991), no. 3--4, 159--171.

\bibitem{PVV19}
M. Pikula, V. Vladi\v{c}i\'c, B. Vojvodi\'{c},  {\it Inverse spectral problems for Sturm--Liouville operators with a constant delay less than
half the length of the interval and Robin boundary conditions}, Results in Mathematics \textbf{74} (2019), no. 1, Paper No. 45.

\bibitem{VBV22}
V. Vladi\v{c}i\'{c}, M. Bo\v{s}kovi\'{c}, B. Vojvodi\'{c}, {\it Inverse Problems for Sturm–Liouville-Type Differential Equation with a Constant Delay Under Dirichlet/Polynomial Boundary Conditions}, Bulletin of the Iranian Mathematical Society \textbf{48} (2022), no. 4, 1829-1843.

\bibitem{VlaPik}
V. Vladicic, M. Pikula, {\it An inverse problems for Sturm–Liouville-type differential equation with a constant delay}, Sarajevo J. Math \textbf{12} (2016), no. 1, 83--88.

 \bibitem{vpm} 
B. Vojvodić, N. Pavlović Komazec, {\it Inverse problems for Sturm – Liouville operator with potential functions from $L_{2} [0,\pi]$}, Mathematica Montisnigri \textbf{49} (2020), 28--38.  

\bibitem{vojj}
B. Vojvodi\'{c}, N. Pavlovi\'{c} Komazec, F.A. \c{C}etinkaya, {Recovering differential operators with two retarded arguments},  Boletín de la Sociedad Matemática Mexicana  \textbf{28} (2022), no. 3, Paper No. 68.

\bibitem{vpv}
B. Vojvodi\'{c} , M. Pikula,  V. Vladi\v{c}i\'{c},  {\it Inverse problems for Sturm--Liouville differential operators with two constant delays under Robin boundary conditions}. Results in Applied Mathematics \textbf{5} (2020), Paper No. 100082. 

\bibitem{vpvc}  
B. Vojvodi\'{c}, M. Pikula,  V. Vladi\v{c}i\'{c},  F.A. \c{C}etinkaya, { \it Inverse problems for differential operators with two delays larger than half the length of the interval and Dirichlet conditions},  Turkish Journal of Mathematics \textbf{44.3} (2020), no. 3, 900--905.

\bibitem{vv}
B. Vojvodic ,   V. Vladicic, {\it Recovering differential operators with two constant delays under Dirichlet/Neumann boundary conditions}. Journal of Inverse and Ill-posed Problems \textbf{28} (2020), no. 2, 237-241.

\bibitem{vvdju}
B. Vojvodi\'{c}, V. Vladi\v{c}i\'{c}, N. Djuri\'{c} {\it Inverse problem for Dirac operators with two constant delays}. Journal of Inverse and Ill-posed Problems (forthcoming), https://doi.org/10.1515/jiip-2023-0047.

\bibitem{wang1}
F. Wang, C.F Yang, {\it Inverse problems for Dirac operators with a constant delay less than half of the interval}, preprint (2023), https://arxiv.org/abs/2305.12631.

\bibitem{wang2}
F. Wang, C.F. Yang, {\it Incomplete inverse problem for Dirac operator with constant delay}, preprint (2023), https://arxiv.org/abs/2305.10752.




\end{thebibliography}
\end{document}